\DeclareMathOperator{\supp}{supp}
\newcommand{\RR}{\mathbb{R}} 
\newcommand{\CC}{\mathbb{C}} 
\newcommand{\DD}{\mathbb{D}} 
\theoremstyle{remark}
\newtheorem{remark}{\bf Remark}
\begin{document}
\titlerunning{The Harnack distance and estimates of subharmonic functions} 
\authorrunning{Khabibullin B.N.} 

\title{The Harnack distance and  estimates of subharmonic functions  from below on the unit disc}

\author{\firstname{B.N.}~\surname{Khabibullin}}
\email[E-mail: ]{khabib-bulat@mail.ru} \affiliation{Institute of Mathematics with Computing Centre - Subdivision of the Ufa Federal Research Centre of the Russian Academy of Sciences,
112, Chernyshevsky str., Ufa, Russian Federation, 450008}



\begin{abstract} 
General estimates from below of holomorphic and subharmonic functions play one of the key roles in the theory of growth of holomorphic and subharmonic functions and in general in the theory of potential. At the same time, the most diverse technical apparatus was often used for this. In one of our recent papers, we showed in fairly general cases that in fact such a wide set of tools is not required. The classic Harnack distance is almost always enough. In this article, we will illustrate how this  works in the cases of the unit  disc in the complex plane.
\end{abstract}
\subclass{31A05, 31B05, 30F45, 28A78} 
\keywords{holomorphic function, subharmonic function, Harnack distance, Hausdorff h-content, separatedness of sets} 

\maketitle


\section{INTRODUCTION. Previous general results}

A one-point set is denoted by \{x\} or $x$.
${\mathbb N}$:=\{1,2, \dots\} and ${\mathbb R}$ 
are  sets {\it natural\/} and {\it real\/} numbers;  
${\mathbb R}^+:=\bigl\{x\in {\mathbb R}\bigm| x\geq 0\bigr\}$.
Throughout the paper, ${d}\in {\mathbb N}$ 
denotes the dimension of the Euclidean space ${\mathbb R}^{d}$  equipped with the Euclidean norm |x| :=
$|x|:=\sqrt{x_1^2+\dots +x_{d}^2}$ 
for $x:=(x_1,\dots ,x_{d})\in {\mathbb R}^{d}$,  the Euclidean distance dist(·, ·), and the Alexandroff
one-point compactification
${\mathbb R}_{\infty}^{d}:={\mathbb R}^{d}\cup \infty$. 
For $x \in \mathbb R \cup{±\infty}$ we denote by $x^+$ and
$x^-$ the positive and negative parts of x respectively. 

For a set  $S\subset {\mathbb R}^{d}$
 we denote by  $\complement S:={\mathbb R}_{\infty}^{d}\setminus S$, $\overline S$, $S^\circ$ 
 and   $\partial S$   the complement, closure, interior, and boundary of $S$ in $R^{d}_{\infty}$.
Throughout the paper, $D\neq \emptyset$  denotes a domain in ${\mathbb R}^{d}$.

For $r\in \overline{\mathbb R}^+$ and   $x\in {\mathbb R}^{d}$ we set 
$B_x(r):=\bigl\{y\in {\mathbb R}^{d}\bigm| |y-x|<r\bigr\}$,   
$\overline  B_x(r):=\bigl\{y\in {\mathbb R}_{\infty}^{d}\bigm| |y-x|\leq r\bigr\}$ 
and  $\partial \overline B_x(r):=\overline B_x(r)\setminus  B_x(r)$.  The area of the unit sphere
$\partial B_0(1) \subset  \mathbb R^{d}$ is expressed in terms of the Euler $\Gamma$-function as follows:
\begin{equation}\label{{kK}s}
s_{d-1}=\frac{2\pi^{d/2}}{\Gamma (d/2)}, 
\quad s_{ 0}=2, \;
s_{ 1}=2\pi, \; s_{ 2}=4\pi,\;  s_{ 3}=\pi^2, \dots  . 
\end{equation}

The increasing function
\begin{equation}\label{kKd-2}
\Bbbk_{d-2} \colon  t\underset{0<t\in {\mathbb R}^+}{\longmapsto} \begin{cases}
t &\text{\it  when ${d=1}$},\\
\ln t  &\text{\it  when ${d=2}$},\\
 -\dfrac{1}{t^{d-2}} &\text{\it when ${d>2}$,} 
\end{cases} 
\qquad \Bbbk_{d-2} (0):=\lim_{0<t\to 0} \Bbbk_{d-2} (t)\in \overline {\mathbb R},
\end{equation}
depending on $d$ plays a key role in the theory of potentials and subharmonic functions because
for every fixed $x \in R^{d}$ the function
\begin{equation}\label{kd0}
y\underset{y \in {\mathbb R}^{d}}{\longmapsto}
\frac{1}{s_{d-1}\widehat{d}}\,\Bbbk_{d-2}\bigl(|y-x|\bigr), \qquad
\widehat{d}:=\max\{{1,d-2}\}=1+({d-3})^+\in \mathbb N,
\end{equation}
is the fundamental solution to the Laplace
equation ${\bigtriangleup} u=\delta_x$  in $\mathbb R^{d}$, where ${\bigtriangleup }$ is the Laplace operator acting in the sense of the theory of
distributions and $\delta_x$  is the Dirac probability measure with $\supp \delta_x =\{x\}$ at a single point x.
We often omit the subscript $d- 2$ and write $\Bbbk $ instead of $\Bbbk_{d-2}$ if the value of $d$ is clear from thу context.

We denote by  ${\sf har}(S)$ and  ${\sf sbh}(S)$  the classes of restrictions of all harmonic (locally affine for
$d = 1$) functions on $S \subset \RR^{d}$ and subharmonic (locally convex for $d = 1$) functions on some open
neighborhoods of S respectively \cite{Rans}, \cite{HK}, \cite{Brelot}.


\begin{definition}[{\cite[1.3]{Rans}, \cite{Kohn}}]\label{dH}
For a domain  $D\subset {\mathbb R}^{d}$ and the cone ${\sf har}^+(D)\subset {\sf har}(D)$ of all
positive harmonic functions in $D$ the Harnack distance between points $x\in D$ and  $y\in D$ 
on  $D$
defined by
\begin{equation}\label{hard}
{\sf dist}_{{\sf har}}^D (x,y):=\inf\Bigl\{ t\in {\mathbb R}^+\Bigm|
\frac{1}{t}  h(y)\leq h(x) \leq t h(y)\quad   
\text{for all\;  $h\in {\sf har}^+(D)$}\Bigr\}\geq 1.
\end{equation}
\end{definition}

The main properties of the Harnack distance can be found in \cite[1.3]{Rans}, \cite{Kohn}. In particular, in addition
to the symmetry property, the following multiplicative triangle inequality holds:
\begin{equation}\label{pmt}
{\sf dist}_{{\sf har}}^D(x,y)\leq {\sf dist}_{{\sf har}}^D(x,o)\cdot {\sf dist}_{{\sf har}}^D(o,y)\text{ \it for each $x,o,y\in D$}.
\end{equation} 
For the Harnack distance we have the subordination principle \cite[Theorem 1.3.6]{Rans}, \cite[\S~3, 
Theorem 3.3]{Kohn} which will be
used only in the particular case\cite[Corollary  1.3.7]{Rans}, \cite[\S~3, 
Theorem 3.2]{Kohn}: 
\begin{equation}\label{pr}
{\sf dist}_{{\sf har}}^G\leq {\sf dist}_{{\sf har}}^{D} \quad \text{\it  in $D\subset G$}.
\end{equation} 

In the case of a ball, the Harnack distance to the ball center can be explicitly written as \cite[\S~3, application]{Kohn}:
\begin{equation}\label{DB}
{\sf dist}_{{\sf har}}^{B_{o}(r)}(o, x)=\frac{\bigl(r+|x-o|\bigr)r^{{ d-2}}}{\bigl(r-|x-o|\bigr)^{{ d}-1}},
\quad  x\in B_{o}(r).
\end{equation}
The Harnack distance increases with increasing the distance $|x-x_0|$ from a point $x$ to the center
$x_0$ of the ball $B_{x_0}(R)$.

The class ${\sf sbh}(S)$  contains also a function, denoted by $-\infty$, that coincides with the restriction
on S of the function equal identically to ${\sf sbh}(S)$  in some open neighborhood of the set $S$. The
equality relation = for a pair of functions on ${\sf sbh}(S)$ means that there exists an open set in $\RR^d$
containing $S$ where both functions are defined and pointwise coincide. If $O \subset \RR^d$ is an open set
and  $u\in {\sf sbh}(O)$  is a subharmonic function such that $u\not\equiv-\infty$ on each connected component of
$O$, then the Riesz measure of u is uniquely defined by
\begin{equation}\label{df:cm}
\varDelta_u\overset{\eqref{kd0}}{:=} \frac{1}{s_{ d-1}\widehat{ d}} {\bigtriangleup}  u, 
\end{equation}
It is the Radon measure on $O$, i.e., the Borel measure that is finite on compact sets of $O$. If
$u\equiv -\infty$ on some connected component of $O$, then from the definition of the Riesz measure it
follows that $\varDelta_u$ takes the value $+\infty$ on each subset of this connected component. This definition
is extended to a subharmonic function   $u\in {\sf sbh}(S)$, on the Borel set $S$, regarded as the restriction
on $S$ of the Riesz measure of the subharmonic function $u$ in some open neighborhood of $S$.
For the Borel measure $\mu$  on 
 на ${\mathbb R}^{ d}$ и точки $x\in {\mathbb R}^{ d}$
 we denote by \begin{equation}\label{muyr} 
\mu_x^{\text{\tiny \rm rad}}(t)\underset{t\in {\mathbb R}^+}{:=}\mu\bigl(\overline B_x(t) \bigr)\in \overline {\mathbb R}^+ 
\end{equation}
 the radial counting function of the measure 
$\mu$   with center  $x\in {\mathbb R}^{ d}$, i.e., it is a function that
is continuous from the right on $\RR^+$ and acts from $\RR^+$ to $\RR^+$. Furthermore, 
for $\widehat{ d}\overset{\eqref{kd0}}{=}1+({ d}-3)^+$ 
\begin{equation}\label{muyrN} 
{\sf N}_x^{\mu} (r)\underset{r\in \overline {\mathbb R}^+}{:=}\widehat{ d}\int_0^r\frac{\mu_x^{\text{\tiny \rm rad}}(t)}{t^{{ d}-1}} 
{\,{\mathrm d}} t\in \overline {\mathbb R}^+ 
\end{equation}
is the radial averaged or integrated counting function of the measure $\mu$ with center $x\in \RR^d$. 
It  is increasing and continuous on $\RR+$ as a function from $\RR^+$ to $\RR^+$.

We denote by 
$\diameter\!S:=\sup\limits_{x,y\in S}|x-y|$
 the Euclidean diameter of a set $S \subset  \RR^d$.

\subsection{\bf The main results on estimates from below}
 The first result concerns lower estimates for the values of functions at a single point of a domain.

\begin{theorem}[{\cite[Theorem 1.1]{KhaTal22}}]\label{th1_1}
Let $D$  be a bounded domain in  $ {\mathbb R}^{d}$  with a fixed point $o\in D$, and let $u$ be
a subharmonic function on $D$ with $u(o) = 0$ and the Riesz measure  $\varDelta_u$. Then the restriction
$\varDelta\overset{\eqref{df:cm}}{:=}\varDelta_u\!\bigm|_{\overline {D}}$
of the Riesz measure $\varDelta_u$ on $\overline {D}$  satisfies the inequality
\begin{equation}\label{infuI20}
u(x)\geq -\bigl({\sf dist}_{{\sf har}}^{D}(o,x) -1\bigr)\sup_{\partial D} u
-{\sf N}_x^{\varDelta}(\diameter\!D)
\quad \text{for each point   $x\in D$}.
\end{equation}        

If for $u\in {\sf sbh}(\overline D)$ we are given a domain $G\subset {\mathbb R}^{d}$such that
\begin{equation}\label{oBDG+}
o\in  D\subset \overline D\subset G\subset \overline G\subset {\mathbb R}^{ d},\quad 
u\in {\sf sbh}(\overline G),\quad u(o)=0,
\end{equation}
then for any $r_x\in (0, \diameter\!D]$ 
\begin{subequations}\label{rd}
\begin{gather}
\hspace{-1mm}u(x)\geq -\bigl({\sf dist}_{{\sf har}}^{D}(o,x) -1\bigr)\sup_{\partial D} u- 
\frac{\Bbbk(\diameter\!D)-\Bbbk(r_x)}{\Bbbk(R+l_D^G)-\Bbbk(R)}\sup\limits_{y\in \partial D}{\sf dist}_{{\sf har}}^{G\!\setminus\!o}\bigl(y,\partial B_o(R)\bigr)\sup_{\partial G} u
 -{\sf N}_x^{\varDelta}(r_x),
\tag{\ref{rd}u}\label{infuI2}
\\
\intertext{where $\sup\limits_{\partial D} u$ can be replaced with $\sup\limits_{\partial G} u$ by the maximum principle,}
R:={\sf dist}(o,\partial D)
\tag{\ref{rd}R}\label{rdr}
\\
l_D^G:={\sf dist}(D, \complement G).
\tag{\ref{rd}d}\label{rdd}
\end{gather}
\end{subequations}
For $u(x)\neq -\infty$ the right-hand sides of 
\eqref{infuI20} and   \eqref{infuI2}  are finite.
\end{theorem}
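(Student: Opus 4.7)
The plan is to apply a Riesz--Green decomposition of $u$ on $D$ and then use the Harnack inequality \eqref{hard} to control the harmonic part.

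For \eqref{infuI20}, I would first write the Riesz representation
$$u(x)=H_D^u(x)-s_{d-1}\widehat{d}\int g_D(x,y)\,{\mathrm d}\varDelta_u(y),$$
valid after an inner approximation of $D$ by regular subdomains if needed, where $H_D^u$ is the Perron--Wiener--Brelot solution of the Dirichlet problem in $D$ with boundary data $u\bigm|_{\partial D}$ and $g_D\ge 0$ is the Green function of $D$. Decomposing $g_D(x,y)=-K(x,y)+\phi_y(x)$ with $K$ the fundamental solution from \eqref{kd0} and $\phi_y$ harmonic in $D$, $\phi_y=K(\cdot,y)$ on $\partial D$, the maximum principle combined with the monotonicity of $\Bbbk_{d-2}$ and $|x-y|\le\diameter D$ yields
$$s_{d-1}\widehat{d}\,g_D(x,y)\le\Bbbk(\diameter D)-\Bbbk(|x-y|).$$
A Fubini-type integration by parts based on \eqref{muyr}--\eqref{muyrN} then produces the identity $\int[\Bbbk(\diameter D)-\Bbbk(|x-y|)]\,{\mathrm d}\varDelta(y)={\sf N}_x^{\varDelta}(\diameter D)$, so the potential part of $u(x)$ is bounded below by $-{\sf N}_x^{\varDelta}(\diameter D)$.

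Setting $M:=\sup_{\partial D}u$ (with $M\ge 0$ by the maximum principle and $u(o)=0$), the function $M-H_D^u\ge 0$ is harmonic on $D$ and satisfies $(M-H_D^u)(o)\le M$ because $H_D^u(o)\ge u(o)=0$ from the Riesz representation. Harnack \eqref{hard} applied to $M-H_D^u$ gives $H_D^u(x)\ge-({\sf dist}_{\sf har}^D(o,x)-1)M$, and combined with the potential bound this produces \eqref{infuI20}.

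For \eqref{infuI2}, I split the tail of the ${\sf N}$-integral using the trivial bound $\varDelta_x^{\mathrm{rad}}(t)\le\varDelta(\overline D)$:
$${\sf N}_x^{\varDelta}(\diameter D)-{\sf N}_x^{\varDelta}(r_x)\le\varDelta(\overline D)\bigl[\Bbbk(\diameter D)-\Bbbk(r_x)\bigr],$$
reducing everything to an estimate on the total Riesz mass $\varDelta(\overline D)$. The Poisson--Jensen identity at $o$ on the ball $B_o(R+l_D^G)\subset G$ handles mass inside $\overline B_o(R)$, giving $\varDelta(\overline B_o(R))\cdot[\Bbbk(R+l_D^G)-\Bbbk(R)]\le\sup_{\partial G}u$. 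For the remaining mass on $\overline D\setminus\overline B_o(R)$ I use Jensen at $o$ on $G$ together with Harnack \eqref{hard} for the positive harmonic function $y\mapsto g_G(o,y)$ on the punctured domain $G\setminus\{o\}$: comparing $g_G(o,y)$ with its values at $\partial B_o(R)$ through the factor ${\sf dist}_{\sf har}^{G\setminus o}(y,\partial B_o(R))$, and invoking the subordination principle \eqref{pr} plus $g_G\ge g_{B_o(R+l_D^G)}$ to bound $g_G(o,\cdot)$ from below on $\partial B_o(R)$. A maximum principle argument on $\overline D\setminus\overline B_o(R)$ then reduces the relevant supremum from $y\in\overline D\setminus\overline B_o(R)$ to $y\in\partial D$, yielding
$$\varDelta(\overline D)\le\frac{\sup\limits_{y\in\partial D}{\sf dist}_{\sf har}^{G\setminus o}\bigl(y,\partial B_o(R)\bigr)\,\sup_{\partial G}u}{\Bbbk(R+l_D^G)-\Bbbk(R)},$$
and substituting delivers \eqref{infuI2}. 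The replacement of $\sup_{\partial D}u$ by $\sup_{\partial G}u$ in the first term is the maximum principle applied to $u\in{\sf sbh}(\overline G)$.

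The hardest step will be the last one: propagating the Jensen bound from the ball $B_o(R+l_D^G)$ out to all of $\overline D$ through the Harnack distance in the punctured domain $G\setminus\{o\}$, and justifying the reduction of the supremum to $\partial D$. The approximation of a non-regular $D$ by a monotone sequence of regular subdomains (required to invoke $H_D^u$ and $g_D$) is a routine but careful technicality.
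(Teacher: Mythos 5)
This theorem is imported verbatim from \cite[Theorem~1.1]{KhaTal22} and the present paper contains no proof of it, so there is nothing in this document to compare your argument against line by line. That said, your reconstruction is sound and is evidently the argument the statement's constants encode: the Riesz--Green decomposition with $s_{d-1}\widehat{d}\,g_D(x,y)\le \Bbbk(\diameter\!D)-\Bbbk(|x-y|)$ and the Fubini identity recovering ${\sf N}_x^{\varDelta}(\diameter\!D)$ give \eqref{infuI20} once Harnack is applied to the positive harmonic function $\sup_{\partial D}u-H_D^u$; and for \eqref{infuI2} the bound on the total mass $\varDelta(\overline D)$ via the positive harmonic function $y\mapsto g_G(o,y)$ on $G\setminus o$, compared through ${\sf dist}_{{\sf har}}^{G\setminus o}(\cdot,\partial B_o(R))$ with the ball value $g_{B_o(R+l_D^G)}(o,\cdot)=\frac{1}{s_{d-1}\widehat d}\bigl(\Bbbk(R+l_D^G)-\Bbbk(|\cdot-o|)\bigr)$ and reduced to $\partial D$ by the minimum principle on $D\setminus\overline B_o(R)$, reproduces exactly the factor $\bigl(\Bbbk(R+l_D^G)-\Bbbk(R)\bigr)^{-1}\sup_{y\in\partial D}{\sf dist}_{{\sf har}}^{G\setminus o}\bigl(y,\partial B_o(R)\bigr)$ appearing in \eqref{infuI2} and in \eqref{{eEr}E}. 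The only points you should still write out carefully are the routine ones you already flag (exhaustion of $D$ by regular subdomains, and the interpretation of $\sup_{\partial D}u$ for $u$ merely subharmonic on $D$), plus the closing remark that finiteness of the right-hand sides for $u(x)\neq-\infty$ follows because the Green potential of $\varDelta_u$ at $x$ is then finite.
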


\begin{remark} The condition $u(o) = 0 $ for $u(o)\not\equiv  -\infty$ can be omitted 
if $u(x)$ is replaced with the difference $u(x)-u(o)$ on the left-hand sides of \eqref{infuI20} and \eqref{infuI2}; 
moreover $\sup\limits_D u$ is replaced 
with  $\sup\limits_D u-u(o)$  on the right-hand side of  \eqref{infuI2}, and 
$\sup\limits_G u$  is replaced with $\sup\limits_G u-u(o)$ on
the right-hand side of \eqref{infuI2}. Under the same replacements, the inequalities \eqref{infuI20} and  \eqref{infuI2} also
hold for $u(o)=-\infty$ in the case $u(x)\neq -\infty$ since both sides of \eqref{infuI20} и \eqref{infuI2} are well defined,
whereas the left-hand side is $+\infty$.
\end{remark}

Regarding lower estimates for a subharmonic function $u\in  {\sf sbh}(\overline D)$ on some set, it is rea-
sonable, as a rule, to get an estimate outside some exceptional small set without mentioning
the Riesz measure of a function u or its restriction. We use variant of such
lower estimates based on the so-called normal point method coming back to the Cartan lower
estimate for the modulus of a polynomial. For this purpose we need the following definition.

\begin{definition}[{\cite{Carleson}, \cite{Federer}, \cite{Rodgers}, \cite{HedbergAdams}, \cite{Eid07}, \cite{VolEid13}}]\label{defH}
For  $r\in \overline {\mathbb R}^+\setminus 0$  and  $h\colon [0,r)\to {\mathbb R}^+$
the set function
\begin{equation}\label{mr}
{\mathfrak m}_h^{\text{\tiny $r$}}\colon S\underset{S\subset {\mathbb R}^{ d}}{\longmapsto}  \inf \Biggl\{\sum_{j\in N} h(r_j)\biggm| N\subset {\mathbb N},\,  S\subset \bigcup_{j\in N} 
\overline B_{x_j}(r_j), \, x_j\in {\mathbb R}^{ d}, \, r_j\underset{j\in N}{\leq} r\Biggr\} \in \overline {\mathbb R}^+
\end{equation}
is called the {\it Hausdorff $h$-content of radius $r$.\/} For every  $S\subset {\mathbb R}^{ d}$ the values    ${\mathfrak m}_h^{\text{\tiny $r$}}(S)$  is decreasing
in $r$ and has the limit 
\begin{equation}\label{hH}
{\mathfrak m}_h^{\text{\tiny $0$}}(S):=\lim_{0<r\to 0} {\mathfrak m}_h^{\text{\tiny $r$}}(S)
\geq {\mathfrak m}_h^{\text{\tiny $r$}}(S)\geq {\mathfrak m}_h^{\text{\tiny $\infty$}}(S)
 \quad \text{\it for all $S\subset {\mathbb R}^{d}$}. 
\end{equation}
For $h(0)=0$ all contens  ${\mathfrak m}_h^{\text{\tiny $r$}}$ are exterior measures, and  ${\mathfrak m}_h^{\text{\tiny $0$}}$ defines the Hausdorff $h$-measure ${\mathfrak m}_h^{\text{\tiny $0$}}$ which is a regular Borel measure. The most commonly used functions are power functions  $h_p$ of
degree $p\in {\mathbb R}^+$  with a normalizing factor of the form
\begin{equation}\label{hd}
h_p\colon t\underset{t\in {\mathbb R}^+}{\longmapsto} 
c_pt^p, \quad\text{where } 
c_p:=\dfrac{\pi^{p/2}}{\Gamma(p/2+1)}, 
\end{equation}
The Hausdorff $h_p$-content of radius $r$ и  $h_p$-measure are referred to as the $p$-dimensional content of radius r and the Hausdorff measure respectively and denoted by
\begin{equation}\label{p-m}
p\text{-}{\mathfrak m}^{\text{\tiny $r$}}:={\mathfrak m}_{h_p}^{\text{\tiny $r$}}, \quad  
p\text{-}{\mathfrak m}^{\text{\tiny $0$}}:={\mathfrak m}_{h_p}^{\text{\tiny $0$}}. 
\end{equation}
\end{definition}

\begin{corollary}[{\cite[Corollary 1.1]{KhaTal22}}]\label{cor1}
Let the assumptions of Theorem {\rm \ref{th1_1}}  hold, and let
$S\subset \overline S\subset D$, $0<r\leq \diameter\!D$.
Then for any function $h\colon [0,r]\to {\mathbb R}^+$ such that  $h(0)=0$ and
\begin{equation}\label{N0h}
N_0^h(r):=\widehat{ d}\int_0^r\frac{h(s)}{s^{{ d}-1}}{\,{\mathrm d}} s<+\infty
\end{equation}
there is a set  $E\subset S$ such that
\begin{subequations}\label{eEr}
\begin{align}
 \inf_{x\in S\setminus E} u(x)&\geq -\biggl(
\sup_{x\in S}{\sf dist}_{{\sf har}}^{D}(o,x) -1
\notag
\\
&+\frac{\Bbbk(\diameter\!D)-\Bbbk(r)}{\Bbbk(R+l_D^G)-\Bbbk(R)}\sup\limits_{x\in \partial D}{\sf dist}_{{\sf har}}^{G\!\setminus\!o}\bigl(x,\partial B_o(R)\bigr)+N_0^h(r)\biggr)
\sup_{\partial G} u
\tag{\ref{eEr}u}\label{{eEr}u}
\\
\intertext{and the following estimate from above holds:}
\mathfrak m_h^r(E)&\leq 
\frac{5^{ d}}{\Bbbk(R+l_D^G)-\Bbbk(R)}\sup\limits_{x\in \partial D}{\sf dist}_{{\sf har}}^{G\!\setminus\!o}\bigl(x,\partial B_o(R)\bigr).
\tag{\ref{eEr}E}\label{{eEr}E}
\end{align}
\end{subequations}

In particular, in the case of a power function \eqref{hd} with degree  $p\in ({ d}-2,{ d}]$ multiplied by $B\in {\mathbb R}^+\setminus 0$, the quantity $N_0^h(r)$ on the right-hand side of \eqref{{eEr}u} takes the form
 \begin{equation}\label{N0}
N_0^h(r)\overset{\eqref{N0h}}{=}Bc_p\widehat{ d}\int_0^rt^{p-{ d}+1}{\,{\mathrm d}} t
=B\frac{c_p\widehat{ d}}{p-({ d}-2)}r^{p-({ d}-2)},
\end{equation}
and the $p$-dimensional content \eqref{p-m} of radius $r$ of the exceptional set $E$ satisfies the estimate
\begin{equation}\label{pBB}
p\text{-}{\mathfrak m}^{\text{\tiny $r$}}(E)\overset{\eqref{{eEr}E}}{\leq}
\frac{1}{B}\cdot\frac{5^{ d}}{\Bbbk(R+l_D^G)-\Bbbk(R)}\sup\limits_{x\in \partial D}{\sf dist}_{{\sf har}}^{G\!\setminus\!o}\bigl(x,\partial B_o(R)\bigr).
\end{equation}

\end{corollary}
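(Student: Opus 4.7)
The plan is to apply the pointwise estimate~\eqref{infuI2} of Theorem~\ref{th1_1} at every $x\in S$ with the uniform choice $r_x=r$, and then to control the residual Riesz-measure term ${\sf N}_x^{\varDelta}(r)$ uniformly off an exceptional set $E$ defined in the spirit of the classical Cartan normal-point lemma.

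First I would set
$$
E:=\Bigl\{\,x\in S\,\Bigm|\,\varDelta\bigl(\overline B_x(t)\bigr)>h(t)\,\sup_{\partial G}u\ \text{ for some }\ t\in(0,r]\,\Bigr\}.
$$
For $x\in S\setminus E$ the radial counting function obeys $\varDelta_x^{\text{\tiny \rm rad}}(t)\le h(t)\sup_{\partial G}u$ on $(0,r]$; multiplying by $\widehat d\,t^{-(d-1)}$ and integrating, in view of \eqref{muyrN} and \eqref{N0h}, gives ${\sf N}_x^{\varDelta}(r)\le N_0^h(r)\sup_{\partial G}u$. Substituting this into \eqref{infuI2} with $r_x=r$, replacing $\sup_{\partial D}u$ by $\sup_{\partial G}u$ via the maximum principle as noted just after \eqref{rd}, taking the supremum of the Harnack-distance factor over $x\in S$, and factoring out $\sup_{\partial G}u$ on the right-hand side produces exactly \eqref{{eEr}u}.

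For \eqref{{eEr}E}, every $x\in E$ supplies a witness $r_x\in(0,r]$ with $\varDelta(\overline B_x(r_x))>h(r_x)\sup_{\partial G}u$. A Vitali $5r$-covering lemma extracts a pairwise disjoint subfamily $\{\overline B_{x_j}(r_j)\}$ of $\{\overline B_x(r_x)\}_{x\in E}$ whose $5$-fold dilations cover $E$; combining admissibility for ${\mathfrak m}_h^{\text{\tiny $r$}}$ with the witness inequality leads to
$$
{\mathfrak m}_h^{\text{\tiny $r$}}(E)\,\sup_{\partial G}u\;\le\;5^d\sum_j\varDelta\bigl(\overline B_{x_j}(r_j)\bigr)\;\le\;5^d\,\varDelta\bigl(\overline D\bigr),
$$
so everything reduces to the total-mass bound
$$
\varDelta\bigl(\overline D\bigr)\;\le\;\frac{1}{\Bbbk(R+l_D^G)-\Bbbk(R)}\,\sup_{x\in\partial D}{\sf dist}_{{\sf har}}^{G\setminus o}\bigl(x,\partial B_o(R)\bigr)\,\sup_{\partial G}u,
$$
which is produced by the same Jensen/Poisson--Jensen potential computation that yields the second term of \eqref{infuI2} inside the proof of Theorem~\ref{th1_1}. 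Cancelling $\sup_{\partial G}u$ between the two displays gives \eqref{{eEr}E}, and the power-function specialisation \eqref{N0}--\eqref{pBB} is immediate from substituting \eqref{hd} into \eqref{N0h}.

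The main obstacle is the clean derivation of the total-mass estimate on $\varDelta(\overline D)$ with precisely the geometric constants appearing in \eqref{{eEr}E}; this is a potential-theoretic inequality parallel to, but to be isolated from, the pointwise estimate \eqref{infuI2}. A secondary technical point is the passage from $\sum_j h(5r_j)$ to $5^d\sum_j h(r_j)$ in the covering step, which is transparent for $h=h_p$ with $p\le d$ but in the general case is best handled by restricting the witness radii to $(0,r/5]$ or by invoking a Besicovitch-type variant of Vitali's lemma.
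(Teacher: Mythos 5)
First, a point of reference: the paper itself gives no proof of Corollary~\ref{cor1} --- it is imported verbatim from \cite[Corollary 1.1]{KhaTal22}, and the only methodological indication here is the remark that such results rest on ``the normal point method coming back to the Cartan lower estimate.'' Your sketch follows exactly that method, so the strategy is the intended one. Your derivation of \eqref{{eEr}u} is essentially complete: the definition of $E$ by the condition $\varDelta(\overline B_x(t))>h(t)\sup_{\partial G}u$ for some $t\in(0,r]$, the integration of $\varDelta_x^{\text{\tiny \rm rad}}(t)\le h(t)\sup_{\partial G}u$ against $\widehat{d}\,t^{1-d}{\,\mathrm d}t$ to get ${\sf N}_x^{\varDelta}(r)\le N_0^h(r)\sup_{\partial G}u$ off $E$, the substitution into \eqref{infuI2} with $r_x=r$, and the maximum-principle replacement of $\sup_{\partial D}u$ by $\sup_{\partial G}u$ (legitimate since the coefficient ${\sf dist}_{{\sf har}}^{D}(o,x)-1$ is nonnegative) all work; the specialisation \eqref{N0}--\eqref{pBB} is indeed a one-line computation. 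You should only add a word about the degenerate case $\sup_{\partial G}u=0$, where your later cancellation of $\sup_{\partial G}u$ divides by zero; there $u\equiv 0$ by the maximum principle and everything is trivial.

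For \eqref{{eEr}E} two gaps remain, both of which you name yourself. The substantive one is the total-mass bound $\varDelta(\overline D)\le\bigl(\Bbbk(R+l_D^G)-\Bbbk(R)\bigr)^{-1}\sup_{x\in\partial D}{\sf dist}_{{\sf har}}^{G\setminus o}\bigl(x,\partial B_o(R)\bigr)\sup_{\partial G}u$. This does \emph{not} follow from the statement of Theorem~\ref{th1_1}; it is the key potential-theoretic lemma hiding behind the second term of \eqref{infuI2} (compare \eqref{infuI20} with \eqref{infuI2} via ${\sf N}_x^{\varDelta}(\diameter\!D)-{\sf N}_x^{\varDelta}(r_x)\le\bigl(\Bbbk(\diameter\!D)-\Bbbk(r_x)\bigr)\varDelta(\overline D)$), and it is precisely the place where the Harnack distance on the punctured domain $G\setminus o$ does real work. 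Reducing \eqref{{eEr}E} to this inequality is the right move, but quoting ``the same Jensen/Poisson--Jensen computation'' is not a proof; a self-contained argument must establish it. The second gap is the covering step: the Vitali $5r$-dilation covers $E$ by balls of radii $5r_j\le 5r$, which is not an admissible covering in the definition \eqref{mr} of $\mathfrak m_h^{\text{\tiny $r$}}$ (radii must be $\le r$, and $h$ is not even defined beyond $r$); since by \eqref{hH} the content decreases in the radius parameter, a bound on $\mathfrak m_h^{\text{\tiny $5r$}}(E)$ does not imply \eqref{{eEr}E}. Your first proposed repair (witness radii in $(0,r/5]$) damages the normal-point estimate on $(r/5,r]$ and hence the constants in \eqref{{eEr}u}; the workable repair is the Besicovitch-type selection you mention, which keeps the original radii $r_j\le r$ and a bounded-overlap disjoint decomposition, though matching the exact constant $5^{d}$ of \eqref{{eEr}E} then requires following the specific covering lemma of \cite{KhaTal22} or \cite{Eid07} rather than a generic one.
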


\section{Case of disc in the complex plane $\CC$}
We denote by $\DD:=\bigl\{z\in \CC\bigm| |z|<1 \bigr\}$  \textit{the unit disc\/} in the \textit{complex plane\/} $\CC$, i.e., $d:=2$. Within the framework of the theorem \ref{th1_1} and corollary \ref{cor1}, we assume 
\begin{equation}\label{GD}
 o:=0, \quad G:=\DD, \quad D:=R\DD=B_o(R)\quad\text{where $R\in (0,1)$},
\end{equation}
 i.e., the equality \eqref{rdr} takes place and  
\begin{equation}\label{rdd1}
l_D^G\overset{\eqref{rdd}}{=}{\sf dist}(D, \complement G)={\sf dist}(R\DD, \complement \DD)=1-R\in (0,1).
\end{equation}
\begin{theorem}\label{th2} Consider a subset 
\begin{equation}\label{Ss0}
S\subset \overline S\subset R\DD,  \quad 
s_0:=\sup \bigl\{|s| \bigm| s\in S\bigr\}<R <1.
\end{equation}
Let $u$ be  a subharmonic function on $\overline \DD$ with $u(0) = 0$,
and
\begin{equation}\label{rR}
 0<r\leq 2R\in (0,2).  
\end{equation}
for any function $h\colon [0,r]\to {\mathbb R}^+$, $0<r\leq 2R$, such that  $h(0)=0$ and 
\begin{equation}
N_0^h(r)\overset{\eqref{N0h}}{:=}\int_0^r\frac{h(s)}{s}{\,{\mathrm d}} s<+\infty
\end{equation}
there is a set  $E\subset S$ such that
\begin{equation}\label{frb}
 \inf_{z\in S\setminus E} u(x)
\geq -\biggl(
\frac{2s_0}{R-s_0}+\frac{\ln (2R/r)}{-\ln R}
\Bigl(\frac{1+R}{1-R}\Bigr)^2+N_0^h(r)\biggr)
\sup_{\partial \DD} u,
\end{equation}
where the exceptional set $E$ is bounded in the sense that
\begin{equation}\label{NB}
\mathfrak m_h^r(E)\leq 
\frac{5^{ 2}}{-\ln R}\Bigl(\frac{1+R}{1-R}\Bigr)^2\leq \frac{100}{(1-R)\ln\frac1{R}}
\end{equation}
In particular, in the case of a power function \eqref{hd} with degree  $p\in (0,2]$ multiplied by $B\in {\mathbb R}^+\setminus 0$, the quantity $N_0^h(r)$ on the right-hand side of \eqref{{eEr}u} takes the form
 \begin{equation}\label{Nsh}
N_0^h(r):=B\frac{\pi}{p}r^p,
\end{equation}
and the $p$-dimensional content \eqref{p-m} of radius $r$ of the exceptional set $E$ satisfies the estimate
\begin{equation}\label{pB}
p\text{-}{\mathfrak m}^{\text{\tiny $r$}}(E)\overset{\eqref{{eEr}E}}{\leq}
\frac{1}{B}\cdot\frac{5^{2}}{-\ln R}\Bigl(\frac{1+R}{1-R}\Bigr)^2\leq \frac{100}{B(1-R)\ln \frac1{R}}.
\end{equation}
Thus, the choice of a sufficiently large value 
$B$ allows you to significantly reduce the exceptional set of $E$, but at the same time increases {Nsh} by a multiplier of $B$, i.e. increases by the same multiplier $B$ the last term in the bracket to the right of the estimate \eqref{frb} from below.

\end{theorem}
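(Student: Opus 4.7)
The plan is to specialize Corollary \ref{cor1} to the data \eqref{GD} and compute every quantity that appears on the right-hand sides of \eqref{{eEr}u} and \eqref{{eEr}E}. Since $d=2$, we have $\widehat{d}=1$ and $\Bbbk=\Bbbk_0=\ln$, so from \eqref{GD} together with \eqref{rdd1} we immediately read off
$\diameter D=2R$, $R={\sf dist}(o,\partial D)$, $l_D^G=1-R$, and consequently
\[
\Bbbk(\diameter D)-\Bbbk(r)=\ln\frac{2R}{r},\qquad
\Bbbk(R+l_D^G)-\Bbbk(R)=\ln 1-\ln R=-\ln R .
\]
Next I would use the explicit formula \eqref{DB} with $d=2$, which gives ${\sf dist}^D_{\sf har}(0,x)=\frac{R+|x|}{R-|x|}$; monotonicity in $|x|$ together with \eqref{Ss0} yields
\[
\sup_{x\in S}{\sf dist}^D_{\sf har}(0,x)-1\le \frac{R+s_0}{R-s_0}-1=\frac{2s_0}{R-s_0},
\]
which is precisely the first term in the bracket of \eqref{frb}.

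The main computational step is to bound the factor
$\sup_{x\in\partial D}{\sf dist}^{\mathbb D\setminus o}_{\sf har}\bigl(x,\partial B_o(R)\bigr)$ by $\left(\frac{1+R}{1-R}\right)^{\!2}$. Here I would invoke the structure of positive harmonic functions on the punctured disc $\mathbb D\setminus\{0\}$: every $h\in{\sf har}^+(\mathbb D\setminus\{0\})$ admits a decomposition
\[
h(z)=c\,(-\ln|z|)+h_0(z),\qquad c\ge 0,\quad h_0\in{\sf har}^+(\mathbb D),
\]
so that on the circle $\partial B_o(R)$ the logarithmic term contributes the same constant $c(-\ln R)$ to numerator and denominator of any Harnack ratio; since adding the same nonnegative number to the top and bottom of a fraction $\ge 1$ does not increase it, the ratio is controlled by the Harnack distance on $\mathbb D$ alone. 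For two points on $\partial B_o(R)$ the full-disc Harnack distance is bounded using \eqref{pmt} through the origin and \eqref{DB} with $r=1$, yielding $\frac{1+R}{1-R}\cdot\frac{1+R}{1-R}$. Rotation invariance of $\mathbb D\setminus\{0\}$ makes the supremum over $x\in\partial B_o(R)$ automatic.

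With these ingredients assembled, substituting into \eqref{{eEr}u} produces \eqref{frb} (and the maximum principle replaces $\sup_{\partial D}u$ by $\sup_{\partial\mathbb D}u$), while substituting into \eqref{{eEr}E} produces the first inequality of \eqref{NB}; the second inequality of \eqref{NB} is the elementary estimate $(1+R)^2\le 4$ together with a rough bound on $(1-R)^{-1}$. Finally, for the power case I would specialize $h(t)=Bc_p t^p$ in \eqref{N0h}, use $\widehat d=1$ to obtain $N_0^h(r)=B\frac{c_p}{p}r^p$ (written as \eqref{Nsh}), and apply \eqref{pBB} directly to derive \eqref{pB}. The only subtle point in the whole argument is the punctured-disc Harnack bound; everything else reduces to bookkeeping from \eqref{DB}, \eqref{pmt}, and \eqref{eEr}.
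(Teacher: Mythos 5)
Your proposal is correct and follows essentially the same route as the paper: specialize Corollary \ref{cor1} to the data \eqref{GD}, compute the first two bracket terms from \eqref{DB} and $\Bbbk=\ln$, and handle the only nontrivial step --- the punctured-disc Harnack bound --- via the B\"ocher decomposition $h(z)=c\bigl(-\ln|z|\bigr)+h_0(z)$ with $h_0\in{\sf har}^+(\DD)$, the equality of the logarithmic terms on $|z|=R$, and the multiplicative triangle inequality \eqref{pmt} through the origin, exactly as in the paper. Two small remarks: your constant $B\frac{c_p}{p}r^p$ in the power case is the correct specialization of \eqref{N0} (the paper's \eqref{Nsh} writes $\pi$ in place of $c_p$, which agrees only for $p=2$); and the trailing inequality in \eqref{NB}, being equivalent to $(1+R)^2\leq 4(1-R)$, fails for $R>2\sqrt{3}-3$, so neither your sketch (``$(1+R)^2\leq 4$ plus a rough bound on $(1-R)^{-1}$'') nor the paper actually establishes it --- this is a defect of the statement rather than of your method.
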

\begin{proof}
Iт this case for each point $z\in S\subset R\DD$ by \eqref{DB} we  obtain 
\begin{equation}\label{DB1}
{\sf dist}_{{\sf har}}^{R\DD}(0, z)-1
\overset{\eqref{DB}}{=}\frac{R+|z|}{R-|z|}-1=
\frac{2|z|}{R-|z|},
\quad  z\in S\subset \overline S\subset R\DD.
\end{equation}
For an intermediate fraction-multiplier \eqref{{eEr}u}  with the 
  function $\Bbbk=\ln $ from \eqref{kKd-2} for $d=2$ we have
\begin{equation}\label{Bbbk}
\frac{\Bbbk(\diameter\!D)-\Bbbk(r)}{\Bbbk(R+l_D^G)-\Bbbk(R)}
\overset{\eqref{GD}}{=}\frac{\ln \diameter\!(R\DD)-\ln r}{\ln(R+l_D^G)-\ln R}
\overset{\eqref{rdd1}}{=}\frac{\ln \diameter\!(R\DD)-\ln r}{\ln(R+(1-R))-\ln R} 
=\frac{\ln (2R/r)}{-\ln R}\geq 0
\end{equation}
since according to the conditions $0<r<\diameter\!D=2R$ and $R\overset{\eqref{GD}}{\in} (0,1)$

Let's move on to the assessment of Harnack distance in  \eqref{{eEr}u}: 
\begin{equation}\label{mainpr}
{\sf dist}_{{\sf har}}^{G\!\setminus\!o}\bigl(w,\partial B_o(R)\bigr)
={\sf dist}_{{\sf har}}^{\DD\!\setminus\!0}\bigl(w,\partial (R\DD)\bigr) 
\text{where $w\in \partial D=\partial (R\DD)$, i.e.  always $|w|=R$.}
\end{equation}
According to \eqref{mainpr}, we need to estimate  from above  the Harnack distance in the punctured unit disc $\DD\setminus 0$ between two arbitrary points $w$ и $w_0$ with identical modules $|w|=|w_0|=R$.
The principle of subordination \eqref{pr} will not help here, since  $\DD\setminus 0\subset  \DD$ and not vice versa.

Consider a positive  harmonic function $h$ on $\overline \DD\setminus 0$. By  \cite[B\"ocher's Theorem]{ABR}  there a constant $b\in \RR^+$ such that we have   the following  representation  
\begin{equation}\label{hhh}
h(w)=b\ln \frac{1}{|w|}+v(w), \quad b\in \RR^+, \quad\text{for all $w\in \overline \DD$} 
\end{equation}
where $v$ is a harmonic function on $\DD$   continuous on $\partial \DD$.  By construction  $v\geq 0$ on  
$\partial \DD$ since $b\ln 1/|w|=0$ when $|w|=1$. Therefore, according to the principle of maximum the function $v$ is \textit{positive} and harmonic  on $\DD$. By the Harnack  distance for $\DD$ we obtain 
\begin{equation}\label{1i}
b\ln \frac{1}{|w|}+v(w)\leq b\ln \frac{1}{|w|}+{\sf dist}_{{\sf har}}^{\DD}(w,w_0)v(w_0)
\leq {\sf dist}_{{\sf har}}^{\DD}(w, w_0) \Bigl( b\ln \frac{1}{|w_0|}+v(w_0)\Bigr)
\end{equation}
since ${\sf dist}_{{\sf har}}^{\DD}(w, w_0)\geq 1$ and $|w|=|w_0|=|R|$. By 
the  multiplicative triangle and the Harnack distance  in the case \eqref{DB} of the unit disk 
\begin{equation*}
{\sf dist}_{{\sf har}}^{\DD}(w, w_0)
\leq {\sf dist}_{{\sf har}}^{\DD}(w, 0){\sf dist}_{{\sf har}}^{\DD}(0, w_0)
\leq \frac{1+|w|}{1-|w|}\frac{1+|w_0|}{1-|w_0|}=\Bigl(\frac{1+R}{1-R}\Bigr)^2
\end{equation*}
 and \eqref{1i} can be rewritten as
\begin{equation}\label{1i+}
b\ln \frac{1}{|w|}+v(w)
\leq  \Bigl(\frac{1+R}{1-R}\Bigr)^2\Bigl( b\ln \frac{1}{|w_0|}+v(w_0)\Bigr), \quad |w|=|w_0|.
\end{equation}
By virtue of identically equal $R$ modules of points $w$  and $w_0$ lying on the same circle due to the independence of the multiplier $t:=\Bigl(\frac{1+R}{1-R}\Bigr)^2$ from none of the variables for reasons of symmetry in relation to \eqref{1i+}, the following inequality is also true:
\begin{equation}\label{1i++}
\Bigl(\frac{1-R}{1+R}\Bigr)^2\Bigl(b\ln \frac{1}{|w|}+v(w)\Bigr)
\leq   b\ln \frac{1}{|w_0|}+v(w_0).
\end{equation}
Thus, for each positive  harmonic function $h$ on $\overline \DD\setminus 0$ of the form \eqref{hhh}
in view of  \eqref{1i+} and \eqref{1i++} we obtain inequalities
\begin{equation}\label{hard-}
\Bigl(\frac{1-R}{1+R}\Bigr)^2  h(w)\leq h(w_0) \leq \Bigl(\frac{1+R}{1-R}\Bigr)^2 h(w), \quad   
|w|=|w_0|=R\in (0,1)
\end{equation}
 Using the homothety centered at zero, these inequalities extend to all positive harmonic functions $h$ on 
on the punctured unit circle $\DD\setminus 0$, and by definition of the Harnack distance, inequalities \eqref{hard-} mean that
the Harnack distance between points $w$ and $w_0$ does not exceed $1+2R$.
Thus, according to \eqref{mainpr}  under condition $|w|=1$ we obtain
\begin{equation}\label{disty}
{\sf dist}_{{\sf har}}^{\DD\!\setminus\!0}\bigl(w,\partial (R\DD)\bigr) 
=\sup_{w_0\in \partial(R\DD)}{\sf dist}_{{\sf har}}^{\DD\!\setminus\!0}(w,w_0) 
\leq \Bigl(\frac{1+R}{1-R}\Bigr)^2
\end{equation}

At this stage, by Corollary \ref{cor1}, 
it is established that
for any function $h\colon [0,r]\to {\mathbb R}^+$, $0<r\leq 2R$, such that  $h(0)=0$ and 
\begin{equation}\label{Nr}
N_0^h(r)\overset{\eqref{N0h}}{:=}\int_0^r\frac{h(s)}{s}{\,{\mathrm d}} s<+\infty
\end{equation}
there is a set  $E\subset S$ such that
\begin{equation*}
 \inf_{z\in S\setminus E} u(x)\overset{\eqref{DB1}, \eqref{Bbbk},\eqref{disty}}{\geq} -\biggl(
\frac{2s_0}{R-s_0}+\frac{\ln (2R/r)}{-\ln R}
\Bigl(\frac{1+R}{1-R}\Bigr)^2+N_0^h(r)\biggr)
\sup_{\partial \DD} u;
\end{equation*}
and  the following estimate from above holds:
\begin{equation*}
\mathfrak m_h^r(E)\overset{\eqref{{eEr}E}}{\leq} 
\frac{5^{2}}{\Bbbk(R+l_D^G)-\Bbbk(R)}\sup\limits_{x\in \partial D}{\sf dist}_{{\sf har}}^{G\!\setminus\!o}\bigl(x,\partial B_o(R)\bigr)\overset{\eqref{disty}}{\leq} 
\frac{5^{ 2}}{-\ln R}\Bigl(\frac{1+R}{1-R}\Bigr)^2.
\end{equation*}
Finally,  choice  \eqref{Nsh} is obtained by directly calculating the integral from \eqref{N0}, and the estimate 
follows from \eqref{pBB}:
\begin{equation*}
p\text{-}{\mathfrak m}^{\text{\tiny $r$}}(E)\overset{\eqref{{eEr}E}}{\leq}
\frac{1}{B}\cdot\frac{5^{2}}{\ln(R+l_D^G)-\ln R}\sup\limits_{z\in \partial (R\DD)}{\sf dist}_{{\sf har}}^{\DD\!\setminus\!o}\bigl(z,\partial B_o(R)\bigr)
\overset{\eqref{rdd1},\eqref{disty}}{\leq} \frac{1}{B}\frac{5^2}{-\ln R}
\Bigl(\frac{1+R}{1-R}\Bigr)^2,
\end{equation*}
what gives the final \eqref{pB}.
\end{proof}
\begin{corollary} Let under the conditions \eqref{Ss0} 
 $u$ be  a subharmonic function on $\overline \DD$ with $u(0) = 0$. Choose $r:=2R$ (cf. with \eqref{rR})
 Then for any function $h\colon [0,2R]\to {\mathbb R}^+$ such that  $h(0)=0$ and 
(cf. with \eqref{Nr})
\begin{equation*}
N_0^h(2R):=\int_0^{2R}\frac{h(s)}{s}{\,{\mathrm d}} s<+\infty
\end{equation*}
there is a set  $E\subset S$ such that
\begin{equation}\label{frb-}
 \inf_{z\in S\setminus E} u(x)
\geq -\biggl(
\frac{2s_0}{R-s_0}
+N_0^h(2R)\biggr)
\sup_{\partial \DD} u,
\end{equation}
where the exceptional set $E$ is bounded in the sense \eqref{NB}  with $r:=2R$  
and with the same continuation \eqref{Nsh}--\eqref{pB} of the formulation of theorem\/ {\rm \ref{th2},} but everywhere with $r:=2R$.
\end{corollary}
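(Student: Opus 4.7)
The plan is to derive the Corollary as the boundary-case specialization of Theorem~\ref{th2} obtained by choosing $r:=2R$, which is admissible by \eqref{rR} since the right-hand inequality there is non-strict. First I would check that every hypothesis of Theorem~\ref{th2} transfers: the subset $S$ with $\sup_{s\in S}|s|=s_0<R<1$ is precisely the one in \eqref{Ss0}; the function $u$ is subharmonic on $\overline{\mathbb{D}}$ with $u(0)=0$; and the assumption $h(0)=0$ together with $\int_0^{2R}h(s)/s\,\mathrm{d}s<+\infty$ is exactly $N_0^h(2R)<+\infty$ in the notation of \eqref{N0h} with $d=2$.

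The key observation is that substituting $r=2R$ collapses the middle summand in the bracket on the right-hand side of \eqref{frb}. Indeed,
\begin{equation*}
\frac{\ln(2R/r)}{-\ln R}\Bigl(\frac{1+R}{1-R}\Bigr)^2\bigg|_{r=2R}=\frac{\ln 1}{-\ln R}\Bigl(\frac{1+R}{1-R}\Bigr)^2=0,
\end{equation*}
so \eqref{frb} reduces to \eqref{frb-}. The upper bound \eqref{NB} on $\mathfrak{m}_h^r(E)$ does not depend on the value of $r$ except through the domain of $h$, so it transfers verbatim with $r:=2R$; similarly the Hausdorff $p$-content estimate \eqref{pB} has an $r$-independent right-hand side, so it carries over unchanged.

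Finally, the concrete form \eqref{Nsh} for a power weight $h(s)=Bc_ps^p$ on $[0,2R]$ follows by direct integration $N_0^h(2R)=Bc_p(2R)^p/p$, which, under the paper's normalization \eqref{hd}, matches the displayed formula with $r$ replaced by $2R$. No real obstacle is expected: the argument is essentially a single observation that the fraction \eqref{Bbbk} vanishes at its right endpoint, combined with the fact that the remaining bounds in Theorem~\ref{th2} are insensitive to the specific value of $r$ beyond the admissibility constraint \eqref{rR} and the convergence of $N_0^h$.
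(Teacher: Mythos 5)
Your proposal is correct and coincides with the paper's own argument: the Corollary is obtained by specializing Theorem~\ref{th2} to $r:=2R$, whereupon $\ln(2R/r)=\ln 1=0$ kills the middle summand in the bracket of \eqref{frb}, while the bounds \eqref{NB}--\eqref{pB} carry over unchanged. Your additional checks of the hypotheses and of the admissibility of the endpoint $r=2R$ in \eqref{rR} are sound and only make explicit what the paper leaves implicit.
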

\begin{proof} A significant simplification \eqref{frb-} of the estimate \eqref{frb-}  compared to  
\eqref{frb} related to the disappearance of the essential addition
$$
\frac{\ln (2R/r)}{-\ln R}
\Bigl(\frac{1+R}{1-R}\Bigr)^2
$$
is based on a trivial observation: if $r:=2R$, then  $\ln (2R/r)=0$.

\end{proof}

\bigskip

\end{document}